\numberwithin{equation}{section}
\theoremstyle{plain}
\newtheorem{prop}{Proposition}[section]
\newtheorem{lem}[prop]{Lemma}
\newtheorem{thm}[prop]{Theorem}
\theoremstyle{definition}
\theoremstyle{remark}
\DeclareMathOperator{\SL}{SL}
\newcommand\A{\mathcal{A}}
\newcommand\N{\mathbb{N}}
\newcommand\Q{\mathbb{Q}}
\newcommand\R{\mathbb{R}}
\newcommand\Z{\mathbb{Z}}
\begin{document}
\title[]{Effective uniqueness of Parry measure and exceptional sets in ergodic theory.}

\author{Shirali\@ Kadyrov}

\address[SK]{Department of Mathematics,
Nazarbayev University,
Astana, Kazakhstan}

\email[SK]{shirali.kadyrov@nu.edu.kz}
\keywords{Maximal entropy, symbolic dynamics, Hausdorff dimension}
\subjclass[2010]{Primary: 37A35, 37C45,  Secondary: 28D20}

\begin{abstract}
It is known that hyperbolic dynamical systems admit a unique invariant probability measure with maximal entropy. We prove an effective version of this statement and use it to estimate an upper bound for Hausdorff dimension of exceptional sets arising from dynamics.
\end{abstract}

\maketitle

\section{Introduction}\label{sec:intro}

Dynamical systems theory is a way to investigate time dependence of orbits in a given space acting under certain fixed rules. In a system with chaotic behaviour it is often difficult to understand the orbit of a given single point and instead it is useful to consider invariant measures. In general, there is an abundance of invariant measures and some of them are very special, such as the Parry measure (measure of maximal entropy). It is an interesting problem to study how the remaining measures relate to this measure. This will be our main goal in this paper.

We first define the symbolic space $(\Sigma_A,\sigma)$. For $s \ge 2$ let $A$ be an $s \times s$ square matrix with entries zeros and ones.  Set $\Lambda=\{1,2,\dots,s\}$ and define
$$\Sigma_A:=\left\{  x =(x_n)_{n=-\infty}^\infty \in \Lambda^\Z :  A(x_{n}, x_{n+1})=1, \forall n \in \Z\right\},$$
where $A(i,j)$ is the $(i,j)$th entry of $A$. Endowed with product topology $\Sigma_A$ is a compact space. The shift map $\sigma:\Sigma_A \to \Sigma_A$ is given by $\sigma((x_n)_{n=-\infty}^\infty)=(x_{n+1})_{n=-\infty}^\infty.$  The pair $(\Sigma_A,\sigma)$ is called a \emph{subshift of finite type}. We similarly define the one sided analog as follows: let $\Sigma_A^+=\{x \in \Lambda^\N: A(x_n,x_{n+1})=1, \forall n \in N\}$ and by abuse of notation we define the one sided shift $\sigma:\Sigma_A^+ \to \Sigma_A^+$ by $\sigma((x_n)_{n=0}^\infty)=(x_{n+1})_{n=0}^\infty.$ We now introduce a metric on these spaces. For a given $\theta>1$ define a metric $d_\theta$ on $\Sigma_A$ by 
$$d_\theta(x,y)=\theta^{-t(x,y)} \text{ where } t(x,y)=\max\{n \ge 0: x_i=y_i, |i|<n \}.$$
We also define a metric on $\Sigma_A^+$ by 
$$d_\theta(x,y)=\theta^{-t(x,y)} \text{ where } t(x,y)=\max\{n \ge 0: x_i=y_i, 0\le i<n \}.$$
The matrix $A$ is said to be \emph{irreducible} if  for each pair $(i,j)$ there exists $n\ge 1$ such that $A^n(i,j)>0$. We say that $A$ is \emph{aperiodic} if $A(i,i)=1$ for all $i=1,2,\dots,s$. Throughout the paper we assume that $A$ is both irreducible and aperiodic. By abuse of notation we let $m$ be the Parry measure on both $\Sigma_A$ and $\Sigma_A^+$. For a more precise definition of Parry measure see \S~\ref{sec:parry}. This is the unique measure of maximal entropy  \cite[Theorem~8.10]{Walter}. Our first goal in this paper is to prove the following effective version of the uniqueness statement. To this end we introduce a norm $|\cdot |_\theta$ as follows: For a continuous function $g$ on $\Sigma_A^+$ and any $n \ge 0$ we define ${\rm var}_n g=\sup \{|g(x)-g(y)|:x_i=y_i, 0\le i<n\}$. For any Lipschitz function $g$ on $\Sigma_A^+$ we define the seminorm $|g|_\theta$ by
$$|g|_\theta=\sup \left\{\frac{{\rm var}_n g}{\theta^n}: n \ge 0\right\}.$$ 
Note, that $|g|_\theta$ is the least Lipschitz constant. We similarly define the norm $|\cdot|_\theta$ on $\Sigma_A$ where in the definition of var$_ng$ we use $|i|<n$ instead of $0 \le i <n$. Our main result is the following.

 \begin{thm}\label{thm:symbolic}
Assume that $A$ is irreducible and aperiodic. Then, there exists a constant $c>0$ such that for any $\sigma$-invariant probability measure $\mu$ on $\Sigma_A^+$ and any Lipschitz function $f$ we have
$$\left|\int f d \mu-\int f  d m \right| \le c |f|_\theta(h_{ m}(\sigma) - h_\mu(\sigma))^{1/2}.$$
Moreover, the same result holds for the two sided subshift of finite type $(\Sigma_A,\sigma)$.
\end{thm}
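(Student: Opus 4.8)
The plan is to run the argument through the thermodynamic formalism for $(\Sigma_A^+,\sigma)$, exploiting that $m$ is the equilibrium state of the zero potential and that the topological pressure is a smooth, convex function of the potential. Write $\Delta h = h_m(\sigma) - h_\mu(\sigma) \ge 0$ and $D = \int f\,d\mu - \int f\,dm$. Both sides of the claimed inequality are unchanged if $f$ is replaced by $af+b$ (since $|af+b|_\theta = |a|\,|f|_\theta$ and $D \mapsto aD$), so I first normalize to $|f|_\theta = 1$ and $\int f\,dm = 0$. Because the $d_\theta$-diameter of $\Sigma_A^+$ is at most $1$, this forces $\mathrm{osc}(f) \le |f|_\theta = 1$, whence $|D| = \big|\int (f-c)\,d(\mu-m)\big| \le \mathrm{osc}(f) \le 1$. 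This trivial bound already settles the theorem whenever $\Delta h$ is bounded below, so the genuine content lies in the regime $\Delta h \to 0$.

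For that regime I use the pressure function $t \mapsto P(tf)$, where $P(\varphi) = \sup_\nu\big(h_\nu(\sigma) + \int \varphi\,d\nu\big)$. Two inputs are needed. First, the variational principle gives, for every invariant $\mu$ and every $t$, the one-sided bound $h_\mu(\sigma) + t\int f\,d\mu \le P(tf)$, while $P(0) = h_m(\sigma)$ since $m$ maximizes entropy. Second, via the spectral theory of the Ruelle transfer operator on the space of Lipschitz functions—the spectral gap and the resulting exponential decay of correlations—the map $t \mapsto P(tf)$ is real-analytic near $0$, with $P'(0) = \int f\,dm$ and $P''(t)$ equal to the asymptotic variance of $f$ under the equilibrium state of $tf$. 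The decay-of-correlations estimate yields $0 \le P''(t) \le C_1|f|_\theta^2$ uniformly for $|t| \le t_0$, so that $P(tf) \le P(0) + t\int f\,dm + \tfrac12 C_1 t^2|f|_\theta^2$ on this range.

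Combining the two inputs and using $P(0) = h_m(\sigma)$ gives, for $0 < t \le t_0$,
\[
t\,D \;\le\; \Delta h + \tfrac12 C_1 t^2 |f|_\theta^2 .
\]
Dividing by $t$ and optimizing at $t = \big(2\Delta h/(C_1|f|_\theta^2)\big)^{1/2}$ produces $D \le (2C_1)^{1/2}|f|_\theta(\Delta h)^{1/2}$; applying the same to $-f$ controls $-D$, so $|D| \le (2C_1)^{1/2}|f|_\theta(\Delta h)^{1/2}$. The optimal $t$ lies in $(0,t_0]$ exactly when $\Delta h/|f|_\theta^2$ is small—precisely the regime not covered by the trivial bound of the first paragraph—so taking $c$ to be the larger of the two resulting constants proves the one-sided statement.

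The main obstacle is the quantitative second input, namely the uniform variance bound $P''(t) \le C_1|f|_\theta^2$. This is where irreducibility and aperiodicity of $A$ enter, through the spectral gap of the transfer operator on the Lipschitz space and the ensuing summable, exponentially decaying correlations $|\mathrm{Cov}(f, f\circ\sigma^k)| \le C|f|_\theta^2\rho^k$ with $\rho<1$; persistence of the gap under small perturbation of the potential supplies the uniformity in $t$. Finally, the two-sided case $(\Sigma_A,\sigma)$ reduces to the one-sided one by the standard fact that every Lipschitz $f$ on $\Sigma_A$ is cohomologous to a Lipschitz function depending only on future coordinates, with comparable seminorm (at the cost of replacing $\theta$ by a power of itself); since cohomology alters neither $\int f\,d\nu$ for invariant $\nu$ nor the entropies, the estimate transfers directly.
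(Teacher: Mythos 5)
Your proposal is correct, but it takes a genuinely different route from the paper's proof. The paper never introduces the pressure function: it telescopes $\int f\,d\mu-\int f\,dm$ along the transfer-operator iterates $f_n=\mathcal{L}^n f$, bounds each increment by $\bigl|\int f_{n+1}\,d\mu-\int f_n\,d\mu\bigr|\le\sqrt{2}\,|f_n|_\infty\,(h_m(\sigma)-h_\mu(\sigma))^{1/2}$ by applying Pinsker's inequality to the conditional measures of $m$ and $\mu$ on one-step cylinders (this, plus Cauchy--Schwarz, is where the exponent $1/2$ arises there), and then sums a geometric series using the Ruelle--Perron--Frobenius bound $|\mathcal{L}^n f|_\infty\le C\rho^n|f|_\theta$. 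You instead combine the variational principle with the quadratic Taylor bound $P(tf)\le P(0)+t\int f\,dm+\tfrac12 C_1t^2|f|_\theta^2$ and optimize in $t$, so for you the exponent $1/2$ reflects the quadratic behaviour of pressure at its equilibrium. Both arguments ultimately rest on the spectral gap of the transfer operator on the Lipschitz space, but you need it in a stronger form: analyticity of $t\mapsto P(tf)$, the identification of $P''(t)$ with an asymptotic variance, and---the one step you should spell out more---uniformity of $t_0$ and $C_1$ over all $f$ in your normalized class, since the standard references state these facts for a fixed potential. The uniformity does hold: your normalization forces $\|f\|_\infty\le 1$ (after subtracting the mean) and hence a uniform bound on $\|f\|_\theta$, so the operators $\mathcal{L}_{tf}$, $|t|\le t_0$, range over a fixed small operator-norm neighbourhood of $\mathcal{L}$ and analytic perturbation theory gives constants depending only on $A$ and $\theta$. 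In exchange for this heavier machinery, your route is more conceptual and extends essentially verbatim to equilibrium states of other Lipschitz potentials (replace $P(0)=h_m(\sigma)$ by $P(\varphi)=h_{m_\varphi}(\sigma)+\int\varphi\,dm_\varphi$), a generalization the paper only alludes to; the paper's route is more elementary and self-contained, with RPF as its only black box, and it produces the explicit constant $\sqrt{2}C/(1-\rho)$. Your reduction of the two-sided case via Sinai's cohomology lemma is correct and is essentially the same one-sided reduction (via Parry--Pollicott) that the paper invokes.
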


We will give the proof of one sided case and briefly refer how to deduce the two sided counterpart. As a consequence we obtain the analogous result for hyperbolic maps. Before we state the next theorem we would like to introduce the kind of hyperbolic maps we consider.

We consider two kinds of hyperbolic maps: the expanding maps (repellers) and Axiom A diffeomorphisms. 

Let $M$ be a compact, connected, smooth Riemannian manifold and $T:M \to M$ a $C^1$-map. Let $J$ be a compact invariant set so that $T^{-1}J = J$. We say that the pair $(J,T)$ is \emph{a repeller} if $T:J \to J$ is \emph{expanding}, that is, after smooth modification of the Riemannian metric there exists a constant $\theta_0>1$ such that for any $x\in J$ and $v$ in the tangent space $T_x M$, the derivatives satisfy
$$\|D_x T^k(v)\| \ge \theta_0^k \|v\| \text{ for any } k \in \N,$$
and if $J$ is \emph{maximal}, that is, there exists an open neighborhood $V$ of $J$ such that
$$J=\{x \in V: T^n(x) \in V \text{ for all } n \ge 0\}.$$

For a map $T:M \to M$ we define the set $\Omega=\Omega(T)$ of all \emph{non-wandering} points, that is, $x \in \Omega$ if for any neighborhood $U$ of $x$ we have
$$U \cap \bigcup_{k=1}^\infty T^{-k} U \ne \emptyset. $$
Clearly, $\Omega$ is a closed invariant set containing all the periodic points. We say that the set $\Omega$ is \emph{hyperbolic} if there exists a constant $\theta>1$ such that for each $x \in \Omega$ the tangent space $T_x M$ can be decomposed as a direct sum of two $T$-invariant subspaces $W^s(x)$ and $W^u(x)$ so that (after a smooth change of the Riemannian metric, see \cite{HP}) we have
\begin{align*}
\|D_x T^{-k}(v)\| &\le \theta_0^{-k} \|v\| \text{ for any } v \in W^u(x) \text{ and }k \in \N,\\
\|D_x T^k(v)\| &\le \theta_0^{-k} \|v\| \text{ for any } v \in W^s(x) \text{ and }k \in \N.
\end{align*}
A diffeomorphism $T:M \to M$ is called \emph{Axiom A diffeomorphism} if $\Omega(T)$ is hyperbolic and $\Omega(T)=\overline{\{x: x \text{ is periodic}\}}.$ Examples of Axiom A diffeomorphisms include the Anosov diffeomorphisms when $M$ itself is assumed to be hyperbolic \cite{Anosov}. For various examples of Axiom A diffeomorphisms we refer to \cite{Smale}. 

\begin{thm}\label{thm:main}
Let $(J,T)$ be a mixing repeller or $(\Omega(T),T)$ be a mixing Axiom A diffeomorphism.  Then, there exists a constant $c>0$ such that for any $T$-invariant probability measure $\mu$ and any Lipschitz function $f$ with Lipschitz constant $L$ we have
$$\left|\int f d \mu-\int f dm \right| \le c L (h_m(T) - h_\mu(T))^{1/2},$$
where $m$ is the unique measure of maximal entropy.
\end{thm}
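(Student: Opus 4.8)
The plan is to deduce Theorem~\ref{thm:main} from the symbolic statement of Theorem~\ref{thm:symbolic} via a symbolic coding of the hyperbolic system by a subshift of finite type. For a mixing repeller $(J,T)$ or a mixing Axiom~A diffeomorphism restricted to a basic set, the foundational theory of Markov partitions (due to Sinai, Bowen, and Ruelle; see the treatment in Bowen's lecture notes) furnishes a finite Markov partition whose associated transition matrix $A$ is irreducible and aperiodic, together with a Hölder-continuous, finite-to-one, surjective semiconjugacy $\pi\colon \Sigma_A \to J$ (\resp $\pi\colon\Sigma_A\to\Omega(T)$) satisfying $T\circ\pi=\pi\circ\sigma$. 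The map $\pi$ is a bijection off a set of measure zero for every ergodic invariant measure of positive entropy, so it induces a measure-theoretic isomorphism between the relevant invariant measures on $J$ (\resp $\Omega(T)$) and on $\Sigma_A$. In particular $\pi$ is entropy-preserving: for the push-forward correspondence $\mu\mapsto\tilde\mu$ with $\pi_*\tilde\mu=\mu$ one has $h_{\tilde\mu}(\sigma)=h_\mu(T)$, and the measure of maximal entropy $m$ on the manifold is the $\pi$-image of the Parry measure on $\Sigma_A$.

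The key step is then to transport the inequality through $\pi$. Given a $T$-invariant probability measure $\mu$ on $J$ and a Lipschitz function $f\colon J\to\R$ with Lipschitz constant $L$, I would lift $\mu$ to a $\sigma$-invariant probability measure $\tilde\mu$ on $\Sigma_A$ (choosing any invariant preimage, which exists since $\pi$ is a factor map) and set $\tilde f=f\circ\pi$. Because both sides of the desired inequality are integrals against invariant measures and entropies, the change of variables $\int f\,d\mu=\int \tilde f\,d\tilde\mu$ and $\int f\,dm=\int\tilde f\,dm$ reduces the left-hand side exactly to the symbolic left-hand side, while the entropy difference $h_m(T)-h_\mu(T)$ equals $h_{m}(\sigma)-h_{\tilde\mu}(\sigma)$. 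Applying Theorem~\ref{thm:symbolic} to $\tilde f$ therefore yields
\begin{equation*}
\left|\int f\,d\mu-\int f\,dm\right|\le c\,|\tilde f|_\theta\bigl(h_{m}(\sigma)-h_{\tilde\mu}(\sigma)\bigr)^{1/2}.
\end{equation*}
It remains only to bound the symbolic seminorm $|\tilde f|_\theta=|f\circ\pi|_\theta$ by a constant multiple of the geometric Lipschitz constant $L$.

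The main obstacle is precisely this last estimate, which rests on the Hölder regularity of the coding map $\pi$. Since $\pi$ is Hölder continuous, there exist constants $C>0$ and $\alpha\in(0,1]$ with $d_M(\pi(x),\pi(y))\le C\,d_\theta(x,y)^\alpha$; after passing from $\theta$ to $\theta^\alpha$ (an innocuous rescaling of the symbolic metric that does not affect the hypotheses of Theorem~\ref{thm:symbolic}, since $\theta^\alpha>1$ still holds), one gets $\var_n\tilde f\le L\cdot C\,\theta^{-\alpha n}$ up to constants, whence $|\tilde f|_\theta\le C'L$ for a constant $C'$ depending only on $T$ and the chosen Markov partition. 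Absorbing $C'$ into the constant $c$ completes the proof. I would carry out the Axiom~A case by the identical argument applied to the restriction of $T$ to its basic set, noting that mixing guarantees the transition matrix can be taken aperiodic so that Theorem~\ref{thm:symbolic} applies; the passage from the two-sided shift to the hyperbolic diffeomorphism is where one invokes the two-sided version of Theorem~\ref{thm:symbolic} asserted at the end of its statement.
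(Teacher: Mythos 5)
Your proposal is correct and follows essentially the same route as the paper: code the system by a Markov partition with irreducible aperiodic transition matrix, lift $\mu$ to $\tilde\mu$ with $\pi_*\tilde\mu=\mu$, bound $|f\circ\pi|_\theta$ by a multiple of $L$ using the expansion/H\"older regularity of $\pi$, and apply Theorem~\ref{thm:symbolic}. The only (harmless) difference is that you invoke entropy preservation $h_{\tilde\mu}(\sigma)=h_\mu(T)$ via finite-to-one-ness of $\pi$, whereas the paper gets by with the elementary facts $h_m(\sigma)=h_m(T)$ and $h_{\tilde\mu}(\sigma)\ge h_\mu(T)$ (a factor never has more entropy than its extension), which already give $h_m(\sigma)-h_{\tilde\mu}(\sigma)\le h_m(T)-h_\mu(T)$.
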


From semicontinuity of entropy it follows that if a sequence of invariant probability measures $(\mu_n)$ satisfies $h_{\mu_n}(T) \to h_m(T)$ as $n \to \infty$ then $(\mu_n)$ converges to $m$ in the weak* topology. Theorem~\ref{thm:main} makes this statement effective by comparing the integrals of Lipschitz functions. We also note that although we focused on the measure of maximal entropy the ides should work for alternative measures.

 The results similar to Theorem~\ref{thm:symbolic} and Theorem~\ref{thm:main} were first realised by F.~Polo in \cite{Polo}. Theorem~\ref{thm:symbolic} can be regarded as a generalisation and improvement of some of his results. In fact, he obtains a weaker exponent $1/3$ instead of $1/2$. We do not know if the exponent $1/2$ is sharp. However, if one considers equilibrium measures $\mu$ for certain Lipschitz functions $f$ such that $\mu \ne m$ then clearly $\int f d\mu - \int f d m \ge h_m(\sigma) - h_\mu(\sigma)$ which suggests that the exponent should not be greater than 1.  We also refer to R.~R\"{u}hr \cite{Ruh13} where a similar theorem was obtained for a diagonal action on $\Gamma \backslash \SL(d,\Q_p)$. Our method uses some ideas from the proofs of the related theorems on tori due to Polo \cite{Polo} and improves some of the estimates. To prove \cite[Theorem~4.1.1]{Polo}, Polo makes use of the existence of fine partitions with thin boundary subordinate to the stable subgroup. Such partitions were extensively studied by Einsiedler and Lindenstrauss  in homogeneous dynamics context, see e.g. \cite{EL10}. In our situation, however, we make use of the existence of Markov partitions instead. This way we obtain Theqorem~\ref{thm:main} as a result of Theorem~\ref{thm:symbolic} considering the symbolic representation of the system by a subshift of finite type. 

We now state an application of our main result to estimating Hausdorff dimension of certain exceptional sets.

 For a repeller $(J,T)$ and for any $x_0 \in J$ and $\delta>0$ let
$$E(x_0,\delta):=\{x \in J : T^nx \not \in B(x_0,\delta), \forall n \in \N  \},$$
where $B(x_0,\delta)=\{x \in J: d(x,x_0) < \delta\}.$ For an Axiom A diffeomorphism $(\Sigma(T),T)$ we similarly define the sets $E(x_0,\delta)\subset \Sigma(T)$ for $x_0 \in \Sigma(T)$. Estimation of Hausdorff dimension of these exceptional sets have been studied by many. Using the perturbation theory, asymptotically exact formulas for the Hausdorff dimension $\dim_H E(x_0,\delta)$ of exceptional sets were computed by Ferguson and Pollicott \cite{FP12} in the case of \emph{conformal} repellers, see also \cite{KL09}. We also refer to a similar result \cite{H92} in the context of continued fractions.  On the other hand, for general expanding maps there are several works related to estimating a lower bound for $\dim_H E(x_0,\delta)$ to show that the set of nondense orbits has full Hausdorff dimension, see e.g. \cite{U, AN06,T}. In \cite{U}, Urbanski also considers the lower bound estimates in the settings of Anosov diffeomorphisms and Anosov flows.  In the setting of general expanding maps they consider, Abercrombie and Nair \cite{AN06} point out the difficulty of estimating the upper bound for $\dim_H E(x_0,\delta)$ in terms of $\delta$.

As an application of our main result we estimate an upper bound for the Hausdorff dimension of $E(x_0,\delta)$ for both situations we consider in this article, namely, transitive repellers (not necessarily conformal) and transitive Axiom A diffeomorphisms. As in Theorem~\ref{thm:main} we let $m$ be the measure in $(J,T)$ or $(\Omega(T),T)$ with maximal entropy.

\begin{thm}\label{thm:dim} Let $(J,T)$ be a mixing repeller or $(\Omega(T),T)$ be a mixing Axiom A diffeomorphism. Then, there exists a constant $c>0$ such that for any $x_0 \in J$ and $\delta \ge 0$ we have
$$\dim_H  E(x_0,\delta) \le \dim M - c \delta^2 m(B(x_0,\delta))^{2}.$$
\end{thm}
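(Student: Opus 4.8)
The plan is to transport the entropy comparison of Theorem~\ref{thm:main} to the invariant measures carried by the exceptional set, and then to convert the resulting entropy gap into a gap in Hausdorff dimension by means of the thermodynamic (Bowen-type) description of dimension for uniformly hyperbolic systems.

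First I would record the relevant structural facts about $E:=E(x_0,\delta)$. Since $E=\bigcap_{n\ge0}T^{-n}\big(J\setminus B(x_0,\delta)\big)$ is an intersection of closed sets it is compact, and $T(E)\subseteq E$, so $(E,T|_E)$ is a genuine subsystem and the variational principle gives $h_{\mathrm{top}}(T|_E)=\sup\{h_\mu(T):\mu\text{ is }T\text{-invariant with }\mu(E)=1\}$. Any such $\mu$ moreover satisfies $\mu(B(x_0,\delta))=0$: by invariance $\mu(B)=\mu(T^{-1}B)$, and $x\in E$ forces $Tx\notin B$, i.e.\ $E\cap T^{-1}B=\emptyset$, whence $\mu(B)=0$.

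The core step is the entropy deficit. Choose a Lipschitz cut-off $f$ with $0\le f\le1$, $f\equiv1$ on $B(x_0,\delta/2)$, $\supp f\subseteq B(x_0,\delta)$, and Lipschitz constant $L\le C/\delta$; then $\int f\,dm\ge m(B(x_0,\delta/2))>0$ (as $m$ has full support), while $\int f\,d\mu=0$ for every invariant $\mu$ on $E$ because $\mu(B(x_0,\delta))=0$. Applying Theorem~\ref{thm:main} gives
\[
m\big(B(x_0,\delta/2)\big)\ \le\ \Big|\int f\,d\mu-\int f\,dm\Big|\ \le\ c\,\tfrac{C}{\delta}\,\big(h_m(T)-h_\mu(T)\big)^{1/2}.
\]
Squaring and using the doubling regularity of the Gibbs measure $m$ to replace $m(B(x_0,\delta/2))$ by a fixed multiple of $m(B(x_0,\delta))$ yields
\[
h_m(T)-h_\mu(T)\ \ge\ \kappa,\qquad \kappa:=c'\,\delta^2\,m\big(B(x_0,\delta)\big)^2,
\]
uniformly over all invariant $\mu$ carried by $E$. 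Taking the supremum and recalling $h_m(T)=h_{\mathrm{top}}(T)$ produces the topological-entropy deficit $h_{\mathrm{top}}(T)-h_{\mathrm{top}}(T|_E)\ge\kappa$.

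It remains to pass from entropy to dimension, and this is where I expect the real work to lie. Via the Markov partition and the coding $\pi\colon\Sigma_A\to J$, the set $E$ lifts to the admissible sequences whose orbit avoids $B(x_0,\delta)$, and $\dim_H E$ is captured by a Bowen-type pressure equation $P\big(T|_E,-s\,\phi\big)=0$ for the geometric potential $\phi=\log\|DT\|$, with the analogous equation $P\big(T,-s\,\phi\big)=0$ giving $\dim_H J$. Since $\log\theta_0\le\phi\le\log\Lambda$ with $\Lambda:=\sup_x\|D_xT\|<\infty$, the pressure is strictly decreasing in $s$ with slope in $[-\log\Lambda,-\log\theta_0]$, so the reduced complexity of the subsystem depresses its pressure by an amount comparable to $\kappa$ and moves its root to the left of $\dim_H J$ by at least a constant multiple of $\kappa/\log\Lambda$; combined with the trivial bound $\dim_H J\le\dim M$ this gives $\dim_H E\le\dim M-c\,\delta^2 m(B(x_0,\delta))^2$ after renaming constants. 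The main obstacle is making this pressure comparison rigorous in the non-conformal regime: there $\phi$ is no longer scalar, $D_xT^n$ carries a spread of singular values, and one must estimate the cylinder diameters through the whole singular spectrum (a Lyapunov/affinity-dimension computation) while controlling the distortion of the partition elements uniformly in $n$, rather than relying on the single rate $\theta_0$. The doubling of $m$ invoked above is a lesser technical point, handled by its Gibbs structure.
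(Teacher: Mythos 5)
Your first half---invariant measures carried by $E(x_0,\delta)$ give zero mass to $B(x_0,\delta)$, a nonnegative Lipschitz bump with constant $O(1/\delta)$ supported in the ball fed into Theorem~\ref{thm:main} forces $h_\mu(T)\le h_m(T)-c'\delta^2 m(B(x_0,\delta))^2$, and the variational principle turns this into a deficit for the topological entropy $h(T|_{E(x_0,\delta)})$---is exactly the paper's argument (the paper uses a cone of height $\delta L$ rather than a plateau cutoff, and it is just as cavalier as you are about replacing $m(B(x_0,\delta/2))$ by $m(B(x_0,\delta))$, so you are at parity on that technical point).

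The genuine gap is in your second half. You propose to convert the entropy deficit into a dimension deficit through a Bowen pressure equation $P(T|_E,-s\phi)=0$, $\phi=\log\|DT\|$, whose root is supposed to capture the Hausdorff dimension. That characterization is a theorem only for \emph{conformal} systems, while the statement being proved explicitly covers general (non-conformal) repellers and Axiom A diffeomorphisms, where no such formula exists---dimension theory of non-conformal hyperbolic sets via pressure is essentially open. You correctly identify this as ``the main obstacle'' but leave it unresolved, so the proof cannot be completed along the route you sketch. There is a second, quieter problem even in the conformal case: an entropy deficit relative to the measure of maximal entropy does not yield a pressure deficit of comparable size at the relevant parameter, because measures on $E$ may have smaller $\int\phi\,d\mu$, offsetting the entropy loss inside $h_\mu-s\int\phi\,d\mu$; to argue that way one would need an effective uniqueness theorem for the equilibrium state of $-s\phi$, which Theorem~\ref{thm:main} does not provide. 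The paper bypasses all of this with an elementary covering computation that never mentions pressure or $\dim_H J$: for any $h>h(T|_E)$, the number of $n$-cylinders of the Markov partition meeting $E$ is at most $e^{nh}$ (via $(n,\epsilon)$-separated sets); each such cylinder, having Parry measure $\asymp\lambda^{-n}$, is covered by at most $a\lambda^{-n}\Theta^{n\dim M}$ balls of radius $\Theta^{-n}$, where $\Theta$ is a global Lipschitz constant for $T$; hence $E$ is covered by $a e^{n(h-\log\lambda+\dim M\log\Theta)}$ balls of radius $\Theta^{-n}$, giving upper box dimension (hence Hausdorff dimension) at most $\dim M-(\log\lambda-h)/\log\Theta$. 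The crude conversion factor $1/\log\Theta$ suffices precisely because the goal is a bound of the form $\dim M$ minus a constant multiple of the entropy deficit, not an exact dimension formula; no geometric potential, singular values, or pressure roots are needed. That covering argument is the idea missing from your proposal.
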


We will make no claim on the sharpness of the exponent $2$. In the next section we introduce the Parry measure and state the Pinsker inequality. In \S~\ref{sec:fn} we first introduce a sequence of functions $(f_n)$ using a transfer operator. Then, using the Pinsker inequality and properties of $f_n$'s we give the proof of Theorem~\ref{thm:symbolic}. The proof of Theorem~\ref{thm:main} is obtained in \S~\ref{sec:markov} using the Markov partitions and finally the upper estimate for the Hausdorff dimension of the exceptional sets were obtained in the last section.

\subsection*{Acknowledgement}
The author acknowledges the partial supported by EPSRC. He is grateful to Manfred Einsiedler for referring to \cite{Polo}. He also would like to thank Ren\'{e} R\"{u}hr and anonymous referee for useful comments for the preliminary version of the article.

\section{Parry measure and Pinsker inequality}\label{sec:parry}

In this section define the Parry measure $ m$ on $\Sigma_A$, \cite{Parry}. Let $\lambda> 1$ be the largest eigenvalue of $A$. Since $A$ is irreducible, using Perron-Frobenius theory cf. \cite[\S~0.9]{Walter}, we may pick strictly positive left and right eigenvectors $(u_0,u_1, \dots,u_{s-1})$ and $(v_0,v_1,\dots,v_{s-1})$ respectively with $\sum_{i=0}^{s-1} u_i v_i=1$. We set $p_i=u_i v_i$ and $p_{ij}=a_{ij} v_j /\lambda v_i$. Then the Markov measure $  m$ given by the probability vector ${\bf p}=(p_0,p_1,\dots,p_{s-1})$ and the stochastic matrix $(p_{ij})$ is called Parry measure.  

Let $\xi=\{C_i: i \in \Lambda\}$ be the standard partition for $\Sigma_A^+$ where $C_i=\{x \in \Sigma_A^+: x_0=i\}$. We say that a finite or an infinite word $(i_0,i_1,\dots)$ is \emph{admissible} if for any $n$, $A(i_n, i_{n+1})=1.$ For any admissible $(i_0,i_1,\dots,i_k)$ we define the $(k+1)$-cylinder set
$$C(i_0,i_1,\dots,i_k):=\{  x \in \Sigma_A^+ : x_0=i_0,\dots,x_k=i_k\}=\bigcap_{n=0}^k \sigma^{-n} C_{i_n}.$$
We recall that the Parry measure satisfies for any cylinder set
\begin{equation*}
   m (C(i_\ell,i_1,\dots,i_{\ell+k}))=p_{i_\ell} p_{i_\ell i_{\ell+1}} p_{i_{\ell+1} i_{\ell+2}} \cdots p_{i_{\ell+k-1} i_{\ell+k}}.
 \end{equation*}
This gives
\begin{equation}\label{eqn:volC}
  m(C(i_\ell,\dots,i_{\ell+k}))=u_{i_\ell} v_{i_\ell} \prod_{j=0}^{k-1} \frac{a_{i_{\ell+j} i_{\ell+j+1}}v_{i_{\ell+j+1}}}{\lambda v_{i_{\ell+j}}}=\frac{u_{i_\ell} v_{i_{\ell+k}}}{\lambda^k}.
\end{equation}
By setting $a=\min_{i,j} u_i v_j $ and $b=\max_{i,j} u_i v_j$ we obtain
\begin{equation}
a \lambda^{-k}\le   m(C(i_\ell,i_1,\dots,i_{\ell+k})) \le b \lambda^{-k}.
\end{equation}

For any partition $\zeta$ of $\Sigma_A^+$, let $[  x]_{\zeta}:=\bigcap_{  x \in B \in \zeta} B$ denote the atom of $\zeta$ containing $  x$ and $  m_{  x}^{\zeta}$ denote the conditional measure with respect to $\zeta$ supported on $[  x]_\zeta.$ We let $\A=\bigvee_{i=0}^\infty \sigma^{-i} \xi$ and consider the information function 
$$\iota(  x):=I_{  m}(\A | \sigma^{-1}\A)(  x)=-\log   m_{  x}^{\sigma^{-1}\A} ([x]_\A).$$
For more information on conditional measures we refer to \cite[\S~5]{EW}. In general, conditional measures are defined a.e. Therefore, in general $\iota(  x)$ is a measurable function defined only $  m$-a.e. However, in our situation we also would like to integrate the function $\iota$ w.r.t other measures. So, we want to have an everywhere defined measurable function $\iota$ which requires us to define the conditional measures for all $  x \in  \Sigma_A^+$ as we do now. For any $  x \in \Sigma_A^+$, using \eqref{eqn:volC},  we have 
\begin{align*}
  m_{  x}^{\sigma^{-1}\A} ([  x]_\A)&= \lim_{n \to \infty} \frac{   m([  x]_{\bigvee_{i=0}^{n} \sigma^{-i}\xi})}{   m ([  x]_{\bigvee_{i=1}^{n} \sigma^{-i}\xi})}=\lim_{n \to \infty}\frac{  m(C(x_0,x_1,\dots,x_n))}{  m(C(x_1,x_2,\dots,x_n))}\\
&=\frac{u_{x_0} v_{x_n}/\lambda^n}{u_{x_1} v_{x_n}/\lambda^{n-1}}=\frac{u_{x_0}}{\lambda u_{x_1}} .
\end{align*} 
Thus, we have an \emph{everywhere} defined function
$$\iota(  x)=\log \lambda+g(\sigma   x)-g(  x)$$
where $g(  y)=\log u_{ y_0}.$ Hence, we conclude

\begin{lem}\label{lem:iota}
For any invariant probability measure $\mu$ on $\Sigma_A^+$, we have
$$\int \iota d\mu=h_{  m}(\sigma)=\log \lambda.$$
\end{lem}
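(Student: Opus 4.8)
The plan is to exploit the explicit formula just derived, which exhibits $\iota$ as the constant $\log\lambda$ plus a coboundary. Writing $\iota(x)=\log\lambda+g(\sigma x)-g(x)$ with $g(y)=\log u_{y_0}$, I first note that $g$ is bounded: it depends only on the zeroth coordinate $y_0\in\Lambda$, and since $\Lambda$ is finite and each $u_i>0$, the function $g$ takes only finitely many (finite) values. In particular $g\in L^1(\mu)$ for every probability measure $\mu$, so all the integrals below make sense.

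First I would integrate the formula for $\iota$ against an arbitrary $\sigma$-invariant probability measure $\mu$. The constant term contributes $\log\lambda$ because $\mu$ is a probability measure, while $\sigma$-invariance gives $\int g\circ\sigma\,d\mu=\int g\,d\mu$, so the coboundary $g\circ\sigma-g$ integrates to $0$. This yields $\int\iota\,d\mu=\log\lambda$ for \emph{every} invariant $\mu$, which is precisely the first and last members of the asserted chain of equalities.

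It then remains to identify $\log\lambda$ with the entropy $h_m(\sigma)$. Here I would use that $\xi$ is a one-sided generator, so that $\A=\bigvee_{i=0}^\infty\sigma^{-i}\xi$ generates the full $\sigma$-algebra and the Kolmogorov--Sinai entropy may be computed as the conditional entropy $h_m(\sigma)=h_m(\sigma,\xi)=H_m(\A\mid\sigma^{-1}\A)$ via the standard identity $h_m(\sigma,\xi)=H_m\bigl(\xi\mid\bigvee_{i=1}^\infty\sigma^{-i}\xi\bigr)$. By the definition of conditional entropy as the $m$-integral of the information function, $H_m(\A\mid\sigma^{-1}\A)=\int I_m(\A\mid\sigma^{-1}\A)\,dm=\int\iota\,dm$. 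Applying the previous paragraph with $\mu=m$ gives $h_m(\sigma)=\log\lambda$, and combining the two computations closes the chain $\int\iota\,d\mu=h_m(\sigma)=\log\lambda$.

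There is no substantial obstacle in this argument; the only point requiring genuine care is the identification $h_m(\sigma)=\int\iota\,dm$, namely that the $m$-integral of the information function of the refining partition equals the measure-theoretic entropy. This rests solely on $\xi$ being a generator and on the standard conditional-entropy formula recalled above, while everything else reduces to the elementary cancellation of a bounded coboundary under an invariant measure.
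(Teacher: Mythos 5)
Your proof is correct and matches the paper's own (largely implicit) argument: the paper states the lemma as an immediate consequence of the coboundary formula $\iota = \log\lambda + g\circ\sigma - g$, relying on exactly the cancellation under invariant measures and on the identity $h_m(\sigma) = \int I_m(\A\mid\sigma^{-1}\A)\,dm$ for the generating partition $\xi$, which the paper invokes in the same section. You have simply written out the details the paper leaves to the reader.
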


Let $\Delta_n$ be the $n$-dimensional simplex of probability vectors $  q=(q_1,q_2,\dots,q_n)$ satisfying $q_i \ge 0$ and $\sum_{i=1}^n q_i=1.$ For a fixed vector $  p \in \Delta_n$ with strictly positive entries we define the function
$$\phi: \Delta_n \to \R \text{ by } \phi(  q)=-\sum_{i=1}^n q_i \log \frac{p_i}{q_i},$$
with the convention $0 \log \frac{p_i}{0} = 0.$ Fix the norm $\|  q\|=\sum_i |q_i|$ on $\R^n$. We have

 \begin{lem}[Pinsker Inequality]\label{lem:Delta}
$\phi$ is nonnegative and has a unique $0$ at $  p$. Moreover, for any $  q\in \Delta_n$ we have $$\|  q-  p\| \le \sqrt{2\phi(  q)}.$$
\end{lem}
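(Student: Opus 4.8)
The plan is to recognize $\phi$ as the relative entropy (Kullback--Leibler divergence) of $q$ with respect to $p$, namely $\phi(q)=\sum_i q_i\log\frac{q_i}{p_i}$, and to treat the two assertions separately: first nonnegativity together with the uniqueness of the zero, and then the quantitative $\ell^1$ bound, which is the Pinsker inequality proper.

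For nonnegativity I would apply Jensen's inequality using the strict concavity of $\log$. Summing only over the support $S=\{i:q_i>0\}$ (the convention $0\log\frac{p_i}{0}=0$ makes the excluded terms vanish), one gets $-\phi(q)=\sum_{i\in S}q_i\log\frac{p_i}{q_i}\le \log\!\big(\sum_{i\in S}q_i\cdot\frac{p_i}{q_i}\big)=\log\!\big(\sum_{i\in S}p_i\big)\le\log 1=0$, hence $\phi(q)\ge 0$. For the uniqueness of the zero I would trace the equality cases: equality in Jensen forces the ratios $p_i/q_i$ to be constant on $S$, and equality in the final step forces $\sum_{i\in S}p_i=1$, i.e.\ $p_i=0$ for $i\notin S$. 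Since $p$ has strictly positive entries this means $S$ is all of $\{1,\dots,n\}$, and then the common value of $p_i/q_i$ must be $1$ upon summing both vectors to $1$; thus $q=p$.

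For the quantitative bound I would reduce to the two-point case by grouping. Set $A=\{i:q_i\ge p_i\}$ and write $q_A=\sum_{i\in A}q_i$, $p_A=\sum_{i\in A}p_i$. Splitting $q-p$ into its positive part (indices in $A$) and negative part (indices in $A^c$) gives $\|q-p\|=2(q_A-p_A)$. Applying the log-sum inequality to the two blocks $A$ and $A^c$ collapses the full divergence to the binary divergence, $\phi(q)\ge q_A\log\frac{q_A}{p_A}+(1-q_A)\log\frac{1-q_A}{1-p_A}$. It then remains to establish the binary Pinsker inequality, i.e.\ that for fixed $b=p_A$ the function $F(a)=a\log\frac{a}{b}+(1-a)\log\frac{1-a}{1-b}-2(a-b)^2$ is nonnegative at $a=q_A$. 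This I would do by calculus: $F(b)=0$ and $F'(b)=0$, while $F''(a)=\frac{1}{a(1-a)}-4\ge 0$ because $a(1-a)\le\tfrac14$, so $F$ is convex with minimum value $0$. Combining the two displays yields $\phi(q)\ge 2(q_A-p_A)^2=\tfrac12\|q-p\|^2$, i.e.\ $\|q-p\|\le\sqrt{2\phi(q)}$.

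The individual ingredients here are all classical, so the difficulty is organizational rather than deep. The step demanding the most care is the grouping reduction: verifying that the log-sum inequality genuinely dominates the multidimensional divergence by the two-point divergence, together with the bookkeeping identity $\|q-p\|=2(q_A-p_A)$ relating the $\ell^1$ distance to the single scalar discrepancy $q_A-p_A$. Once that reduction is in place, the binary convexity computation is routine.
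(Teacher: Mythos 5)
Your proof is correct and complete in all details: the Jensen argument for nonnegativity with its equality-case analysis, the bookkeeping identity $\|q-p\|=2(q_A-p_A)$, the log-sum reduction of the full divergence to the binary one, and the convexity computation $F''(a)=\frac{1}{a(1-a)}-4\ge 0$ all check out. For comparison, the paper offers no proof of this lemma at all --- it simply cites \cite[Lemma~12.6.1]{CT} --- and your argument is essentially the standard proof given in that reference (grouping to the two-point case, then calculus), so you have in effect supplied the details the paper delegates to Cover--Thomas.
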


For the proof we refer to \cite[Lemma 12.6.1]{CT}.

Let $  p,  q \in \Delta_s$ be given by $p_i=p_i(  x)=  m_{  x}^{\sigma^{-1}\A}(C_i)$ and $q_i=q_i(  x)=\mu_{  x}^{\sigma^{-1}\A}(C_i)$. Then,
\begin{multline*}
\int (I_m(\A | \sigma^{-1}\A)(  y)-I_\mu(\A | \sigma^{-1}\A)(  y)) d \mu_{  x}^{\sigma^{-1}\A}(  y)\\
=- \sum_{i=1}^s \mu_{  x}^{\sigma^{-1}\A}(C_i) \log \frac{  m_{  x}^{\sigma^{-1}\A}(C_i)}{\mu_{  x}^{\sigma^{-1}\A}(C_i)}=\phi(q(  x))=:\phi_{  x}(q(  x)).
\end{multline*}
We recall that the function $\phi$ is defined for strictly positive $  p$. In the present situation, we may have $p_i=  m_{  x}^{\sigma^{-1}\A}(C_i)=0$ for some $i$. However, in this situation we also have $q_i=0$. So, we may safely drop the $i$-th term in the definition of $\phi.$ 

The fact that $\int \int I_\mu(\A | \sigma^{-1}\A) d \mu_{  x}^{\sigma^{-1}\A} d\mu(x)=h_\mu(\sigma)$ together with Lemma~\ref{lem:iota} give the following.
\begin{lem}\label{lem:intphi} For any invariant probability measure $\mu$ on $\Sigma_A^+$, we have
$$\int \phi_x(q(x)) d\mu(x)=h_m(\sigma)-h_\mu(\sigma).$$
\end{lem}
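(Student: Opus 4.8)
The plan is to integrate the identity displayed immediately before the statement against $\mu$ and to read off the two resulting terms as $h_m(\sigma)$ and $h_\mu(\sigma)$. Recall that we already have, for every $x$,
\[
\int \big(I_m(\A\,|\,\sigma^{-1}\A)(y)-I_\mu(\A\,|\,\sigma^{-1}\A)(y)\big)\, d\mu_{x}^{\sigma^{-1}\A}(y)=\phi_x(q(x)).
\]
First I would integrate this over $x$ with respect to $\mu$ and split the integrand into its two pieces, obtaining
\[
\int \phi_x(q(x))\, d\mu(x)=\int\!\!\int I_m(\A\,|\,\sigma^{-1}\A)(y)\, d\mu_{x}^{\sigma^{-1}\A}(y)\, d\mu(x)-\int\!\!\int I_\mu(\A\,|\,\sigma^{-1}\A)(y)\, d\mu_{x}^{\sigma^{-1}\A}(y)\, d\mu(x).
\]
The second double integral is exactly the quantity recalled just before the lemma, so it equals $h_\mu(\sigma)$, and no further work is needed there.

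For the first double integral I would exploit that $I_m(\A\,|\,\sigma^{-1}\A)$ coincides with the function $\iota$, which by the explicit formula $\iota(x)=\log\lambda+g(\sigma x)-g(x)$ (with $g(y)=\log u_{y_0}$) is an \emph{everywhere}-defined continuous function, not merely an $m$-almost-everywhere equivalence class. Since $\{\mu_{x}^{\sigma^{-1}\A}\}$ is a disintegration of $\mu$ over the sub-$\sigma$-algebra $\sigma^{-1}\A$, the law of total expectation collapses the inner integral, giving
\[
\int\!\!\int \iota(y)\, d\mu_{x}^{\sigma^{-1}\A}(y)\, d\mu(x)=\int \iota(y)\, d\mu(y)=h_m(\sigma)=\log\lambda,
\]
where the last two equalities are precisely Lemma~\ref{lem:iota}. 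Substituting both evaluations into the split identity yields $\int \phi_x(q(x))\, d\mu(x)=h_m(\sigma)-h_\mu(\sigma)$, which is the assertion.

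The one genuinely delicate point, and the step I expect to be the main obstacle to make rigorous, is the cross-integration of the $m$-information function against the conditional measures of $\mu$: a priori $\mu$ may be singular with respect to $m$, so if $\iota$ were defined only $m$-almost everywhere the inner integrals $\int \iota\, d\mu_{x}^{\sigma^{-1}\A}$ would be meaningless. This is exactly why the earlier computation was carried out for \emph{all} $x\in\Sigma_A^+$, producing the everywhere-defined coboundary-plus-constant representative $\iota=\log\lambda+g\circ\sigma-g$, and why Lemma~\ref{lem:iota} was stated for an arbitrary invariant $\mu$ rather than just for $m$. Once that representative is in hand the remaining manipulations are routine applications of the disintegration property and the Kolmogorov--Sinai identity $\int\!\!\int I_\mu(\A\,|\,\sigma^{-1}\A)\, d\mu_{x}^{\sigma^{-1}\A}\, d\mu=h_\mu(\sigma)$, both of which I treat as known.
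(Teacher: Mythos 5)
Your proposal is correct and follows exactly the paper's argument: the paper's one-line proof likewise integrates the displayed identity against $\mu$, identifies $\int\!\!\int I_\mu(\A\,|\,\sigma^{-1}\A)\,d\mu_x^{\sigma^{-1}\A}\,d\mu$ with $h_\mu(\sigma)$, and invokes Lemma~\ref{lem:iota} (via the everywhere-defined representative $\iota=\log\lambda+g\circ\sigma-g$) for the other term. Your discussion of why the everywhere-defined version of $\iota$ is needed when $\mu$ is possibly singular with respect to $m$ matches the motivation the paper itself gives before Lemma~\ref{lem:iota}.
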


\section{A transfer operator and a sequence of Lipschitz functions}\label{sec:fn}

In this section we define the sequence $(f_n)_{n \ge 0}$ of Lipschitz functions using the transfer operator and obtain its useful properties. 

We define the \emph{transfer operator} $\mathcal L:L^1(\Sigma_A^+,\A,m) \to L^1(\Sigma_A^+,\A,m)$ by 
$$\mathcal L f =\frac{d m_f \circ \sigma^{-1}}{d m} \text{ where } dm_f=f dm.$$
It is easy to see that
\begin{equation}\label{eqn:Lexp}
(\mathcal L f) \circ \sigma = E_{  m} (f | \sigma^{-1} \A).
\end{equation}
Define the sequence of functions $f_{n}$ by
$$f_{n}:=\mathcal L^n f=\mathcal L f_{n-1}.$$
We would like to estimate the supremum norms of $f_n$ when $\int f dm=0$. We record (without proof) the following classical result which follows from Ruelle-Perron-Frobenius theorem (see e.g. \cite[Lemma~1.10 ]{B2} and \cite[Theorem 2.2]{PP90} ).

\begin{lem}\label{lem:fnplus1}
There exists a constant $C>0$ and $\rho \in (0,1)$ such that for any Lipschitz function $g$ on $\Sigma_A^+$ with $\int g dm=0$ we have
$$|\mathcal L^n g|_\infty \le  C \rho^n |g|_\theta, \text{ for any } n \ge 0.$$
\end{lem}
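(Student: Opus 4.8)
My plan is to derive the estimate from the spectral gap of the transfer operator $\mathcal L$ acting on the Banach space $F_\theta$ of Lipschitz functions on $\Sigma_A^+$, equipped with the norm $\|g\|_\theta := |g|_\infty + |g|_\theta$. Since $\mathcal L$ is the normalized Ruelle operator attached to the Parry measure, \eqref{eqn:Lexp} gives $\mathcal L 1 = 1$ and $\int \mathcal L g\, dm = \int g\, dm$ for all $g$ (equivalently $\mathcal L^* m = m$); moreover $\mathcal L$ is positive, so $|\mathcal L g|_\infty \le |g|_\infty$. The Ruelle--Perron--Frobenius theorem for the subshift $(\Sigma_A^+,\sigma)$ then furnishes a spectral decomposition $\mathcal L = P + \mathcal N$ on $F_\theta$, where $Pg = \left(\int g\, dm\right)\cdot 1$ is the spectral projection onto the simple leading eigenvalue $1$ and $\mathcal N = \mathcal L(I-P)$ has spectral radius $\rho_0 < 1$. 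Fixing any $\rho \in (\rho_0,1)$, this yields a constant $C_0$ with $\|\mathcal N^n\|_{F_\theta \to F_\theta} \le C_0\,\rho^n$ for all $n \ge 0$.

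The substance of this decomposition, and the step I expect to be the main obstacle, is establishing quasi-compactness of $\mathcal L$ together with triviality of its peripheral spectrum. For the former I would prove a Lasota--Yorke (Ionescu--Tulcea--Marinescu) inequality of the form
$$|\mathcal L^n g|_\theta \le A\,\theta^{-n}|g|_\theta + B\,|g|_\infty,$$
valid for every Lipschitz $g$ and $n \ge 0$. This rests on two facts: each backward branch of $\sigma^n$ contracts the metric $d_\theta$ by exactly $\theta^{-n}$ (since prepending symbols increases the agreement index $t$), producing the contracting term $\theta^{-n}|g|_\theta$; and the bounded distortion of the Lipschitz weight defining $\mathcal L$, producing the term controlled by $|g|_\infty$. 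Combined with $|\mathcal L^n g|_\infty \le |g|_\infty$ and the compact embedding $F_\theta \hookrightarrow C(\Sigma_A^+)$ supplied by Arzel\`a--Ascoli, the Ionescu--Tulcea--Marinescu theorem gives quasi-compactness. The standing hypothesis that $A$ is irreducible and aperiodic, so that $(\Sigma_A^+,\sigma)$ is mixing, is precisely what forces $1$ to be the only unimodular eigenvalue and to be simple; excluding cyclic behaviour in the peripheral spectrum via mixing is where I expect the real work to concentrate.

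Granting the decomposition, the conclusion is short. If $\int g\, dm = 0$ then $Pg = 0$, hence $\mathcal L^n g = \mathcal N^n g$ and
$$|\mathcal L^n g|_\infty \le \|\mathcal N^n g\|_\theta \le C_0\,\rho^n\,\|g\|_\theta = C_0\,\rho^n\big(|g|_\infty + |g|_\theta\big).$$
It remains to absorb $|g|_\infty$ into $|g|_\theta$. Because $t(x,y) \ge 0$ always, the space $\Sigma_A^+$ has $d_\theta$-diameter at most $1$, so (using that $|g|_\theta$ is the least Lipschitz constant) the oscillation satisfies $\max g - \min g \le |g|_\theta$; and since $\int g\, dm = 0$ forces $\min g \le 0 \le \max g$, we get $|g|_\infty \le \max g - \min g \le |g|_\theta$. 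Therefore $\|g\|_\theta \le 2|g|_\theta$, and putting $C := 2C_0$ yields $|\mathcal L^n g|_\infty \le C\,\rho^n\,|g|_\theta$, as claimed.
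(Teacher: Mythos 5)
Your proposal is correct and takes essentially the same approach as the paper: the paper also invokes the Ruelle--Perron--Frobenius spectral gap $\|\mathcal L^n g\|_\theta \le C\rho^n\|g\|_\theta$ on mean-zero functions (though it simply cites Bowen and Parry--Pollicott for it, rather than unpacking the Lasota--Yorke/Ionescu-Tulcea--Marinescu argument as you sketch), and then absorbs $|g|_\infty$ into $|g|_\theta$ via the mean-zero condition to get $\|g\|_\theta \le 2|g|_\theta$, exactly as in your final step.
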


We note that Ruelle-Perron-Frobenius theorem gives the estimate $\|\mathcal L^n g\|_\theta \le  C \rho^n \|g\|_\theta$ where $\|g\|_\theta=|g|_\infty+|g|_\theta$. However, since $\int g dm=0$ we can in fact deduce $|g|_\infty= |g-\int g dm|_\infty \le \sup_x \int |g(x)-g(y)| dm(y) \le |g|_\theta$ so that $\|g\|_\theta \le 2|g|_\theta.$

Next, we would like to estimate $|\int f_{n+1}\,d\mu-\int f_n \,d\mu|$. 

\begin{lem} \label{lem:nplus1nestim} 
For any probability invariant measure $\mu$ on $\Sigma_A^+$ and $n \in \N$ we have
$$\left|\int f_{n+1}\,d\mu -\int f_n\, d\mu\right|\le \sqrt{2} |f_n|_\infty(h_{  m}(\sigma)-h_\mu(\sigma))^{1/2}.$$
\end{lem}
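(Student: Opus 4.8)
The plan is to rewrite the difference $\int f_{n+1}\,d\mu - \int f_n\,d\mu$ using the defining relation $f_{n+1}=\mathcal{L}f_n$ together with the conditional-expectation identity \eqref{eqn:Lexp}, and then to bound the resulting quantity fibrewise via the Pinsker inequality (Lemma~\ref{lem:Delta}). First I would use invariance of $\mu$ and \eqref{eqn:Lexp} to replace the integrand $f_{n+1}=\mathcal{L}f_n$: since $\mu$ is $\sigma$-invariant, $\int f_{n+1}\,d\mu = \int (\mathcal{L}f_n)\circ\sigma\,d\mu = \int E_m(f_n\mid\sigma^{-1}\A)\,d\mu$. The subtle point, and what makes the estimate possible, is that the conditional expectation in \eqref{eqn:Lexp} is taken with respect to $m$, whereas we integrate against $\mu$; the difference between $E_m(\,\cdot\mid\sigma^{-1}\A)$ and $E_\mu(\,\cdot\mid\sigma^{-1}\A)$ is precisely what will be controlled by the entropy gap.

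Next I would disintegrate $\mu$ over the factor $\sigma^{-1}\A$ and work on each atom. On the atom through $x$, the conditional measures $m_x^{\sigma^{-1}\A}$ and $\mu_x^{\sigma^{-1}\A}$ are supported on $\{C_i\}$ and are encoded by the probability vectors $p(x)$ and $q(x)$ introduced before Lemma~\ref{lem:intphi}. The expression $\int f_n\,d E_m(\cdots) - \int f_n\,d\mu$ on a fibre becomes a sum $\sum_i f_n(\text{point in }C_i)\,(p_i(x)-q_i(x))$, so by the triangle inequality it is at most $|f_n|_\infty \sum_i |p_i(x)-q_i(x)| = |f_n|_\infty \,\|p(x)-q(x)\|$. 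Here I would apply Pinsker (Lemma~\ref{lem:Delta}) to get the pointwise bound $\|p(x)-q(x)\| \le \sqrt{2\,\phi_x(q(x))}$.

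Finally I would integrate this fibrewise bound over $\mu$ and invoke Lemma~\ref{lem:intphi}. Integrating $|f_n|_\infty \sqrt{2\,\phi_x(q(x))}$ and pulling the constant $|f_n|_\infty$ out, Jensen's inequality (concavity of the square root) gives
\[
\int \sqrt{2\,\phi_x(q(x))}\,d\mu(x) \le \sqrt{2\int \phi_x(q(x))\,d\mu(x)} = \sqrt{2\bigl(h_m(\sigma)-h_\mu(\sigma)\bigr)},
\]
where the last equality is exactly Lemma~\ref{lem:intphi}. Combining the displayed steps yields the claimed bound $\sqrt{2}\,|f_n|_\infty\bigl(h_m(\sigma)-h_\mu(\sigma)\bigr)^{1/2}$.

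The step I expect to be the main obstacle is the careful bookkeeping in the first two moves: correctly passing from $\int f_{n+1}\,d\mu$ to a fibrewise comparison of $E_m$ against $E_\mu$ using invariance, and verifying that the integrand genuinely collapses to the inner product $\sum_i f_n\,(p_i-q_i)$ against $\mu$ after disintegration. In particular one must check that integrating $\int f_n\,dE_\mu(f_n\mid\sigma^{-1}\A)$ against $\mu$ simply returns $\int f_n\,d\mu$, so that the $E_\mu$ term cancels cleanly and only the $m$-versus-$\mu$ discrepancy, captured by $\phi_x(q(x))$, survives. Once that reduction is made precise, Pinsker and Jensen finish the argument mechanically.
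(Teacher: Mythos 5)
Your proposal is correct and follows essentially the same route as the paper's proof: rewrite $\int f_{n+1}\,d\mu$ as $\int E_m(f_n\mid\sigma^{-1}\A)\,d\mu$ via \eqref{eqn:Lexp} and invariance, compare fibrewise with $E_\mu(f_n\mid\sigma^{-1}\A)$ (whose $\mu$-integral returns $\int f_n\,d\mu$ by the tower property), bound the discrepancy by $|f_n|_\infty\|p(x)-q(x)\|\le |f_n|_\infty\sqrt{2\phi_x(q(x))}$ via Pinsker, and finish with Lemma~\ref{lem:intphi}. The only cosmetic difference is that you invoke Jensen's inequality for the square root where the paper uses Cauchy--Schwarz; these give the identical estimate here.
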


\begin{proof}
From the defining property of conditional measures we have that the conditional expectation for an integrable function $g$ satisfies 
 $$E_{  m} (g | \sigma^{-1} \A)(  x)=\int_{\Sigma_A^+} g(  y) \,d{  m}_{  x}^{\sigma^{-1}\A}(  y).$$
For any $j \in \Lambda$ let $S_j=\{i \in \Lambda : A(i,j)=1\}$. Then,
\begin{align*}
E_\mu(f_n | \sigma^{-1}\A)(  x)&=\sum_{i \in S_{x_1}} f_n(  x^{(i)}) \mu_{  x}^{\sigma^{-1}\A}(C_i),\\
E_m(f_n | \sigma^{-1}\A)(  x)&=\sum_{i \in S_{x_1}} f_n(  x^{(i)}) m_{  x}^{\sigma^{-1}\A}(C_i).
\end{align*}
Combining with Lemma~\ref{lem:Delta} we get that
\begin{align*}
|E_{  m}(f_n | \sigma^{-1}\A)(  x)-E_\mu(&f_n | \sigma^{-1}\A)(  x)|\\
&\le \sum_{i \in S_{x_1}} |f_n(  x^{(i)})| |  m_{  x}^{\sigma^{-1}\A}(C_i)-\mu_{  x}^{\sigma^{-1}\A}(C_i)|\\
&\le |f_n|_\infty\sum_{i \in \Lambda} |  m_{  x}^{\sigma^{-1}\A}(C_i)-\mu_{  x}^{\sigma^{-1}\A}(C_i)|\\
&=|f_n|_\infty \|p(  x)-q(  x)\| \le  |f_n|_\infty \sqrt{2\phi_{  x}(q(  x))} .
\end{align*}
From \eqref{eqn:Lexp} and $\sigma$-invariance of $\mu$ we see that $\int f_{n+1} d\mu=\int E_m(f_n|\sigma^{-1}\A) d\mu$. So,
\begin{multline*}
\left|\int f_{n+1}d\mu -f_n d\mu\right|=\left|\int E_{  m}(f_n|\sigma^{-1}\A)- E_\mu(f_n|\sigma^{-1}\A) d\mu\right|\\
\le \sqrt{2}|f_n|_\infty \int \sqrt{\phi_{x}(q(  x))}d\mu(  x).
\end{multline*}
Finally, we note that the Cauchy-Schwarz inequality yields
$$ \int \sqrt{\phi_{  x}(q(  x))}d\mu(  x) \le \left( \int \phi_x(q(x)) d\mu(x)\right)^{1/2}.$$
Hence, together with Lemma~\ref{lem:intphi} we arrive at
$$\left|\int f_{n+1}d\mu -f_n d\mu\right| \le \sqrt{2}|f_n|_\infty (h_m(\sigma)-h_\mu(\sigma))^{1/2}.\qedhere$$
\end{proof}

We are now ready to prove Theorem~\ref{thm:symbolic} using the above two lemmas.
 
\begin{proof}[Proof of Theorem~\ref{thm:symbolic}]
It suffices to prove Theorem~\ref{thm:symbolic} for Lipschitz functions $f$ with $\int f dm=0.$ As before we set $f_n=\mathcal L^n f$ for $n \ge 0$. From Lemma~\ref{lem:fnplus1} we see that $\int f_n \,d\mu$ converges to $0=\int f \,d  m $ which gives
\begin{align*}
\left| \int f d\mu - \int f d  m\right|=\lim_{n \to \infty} \left|\int f d\mu - \int f_n d\mu\right| \le \sum_{n=0}^\infty \left| \int f_{n+1} d\mu - \int f_n d\mu\right|.
\end{align*}
Now, using the estimate from Lemma~\ref{lem:nplus1nestim} together with Lemma~\ref{lem:fnplus1} we conculde
\begin{align*}
\left| \int f d\mu - \int f d  m\right| &\le \sum_{n=0}^\infty \sqrt{2} |f_n|_\infty(h_{  m}(\sigma)-h_\mu(\sigma))^{1/2}\\
&\le \sum_{n=0}^\infty \sqrt{2} C \rho^n |f|_\theta (h_{  m}(\sigma)-h_\mu(\sigma))^{1/2}\\
&= \frac{\sqrt{2} C}{1-\rho}  |f|_\theta (h_{  m}(\sigma)-h_\mu(\sigma))^{1/2}.
\end{align*}
This proves the theorem for $(\Sigma_A^+, \sigma)$. The proof for the two sided subshift of finite type $(\Sigma_A,\sigma)$ follows analogously.  In this case, one needs to replace Lemma~\ref{lem:fnplus1} with its two sided counterpart which follows from the current Lemma~\ref{lem:fnplus1} using an approximation argument ($\rho$ gets replaced by possibly larger one). For more details see e.g. proof of Proposition~2.4 in \cite{PP90}. 

\end{proof}

 \section{Markov partitions for hyperbolic maps}\label{sec:markov}

In this section we first introduce Markov partitions associated to the hyperbolic maps considered in the introduction. The Markov partitions give us the related symbolic representation of our dynamical system.

It is well known that repellers $(J,T)$ and Axiom A diffeomorphisms $(\Omega,T)$ admit Markov partitions, see e.g. \cite{S1,S2,B1,B2}. We now recall some basics about Markov partitions associated to these systems. 

A repeller $(J,T)$ is conjugated to a one-sided subshift of finite type while the Axiom A diffeomorphism $(\Omega, T)$ is conjugated to a two-sided subshift of finite type. First, let us consider the case when we have a repeller $(J,T)$. Let $\mathcal R = \{R_1,\dots,R_s\}$ be a Markov partition for $(J,T)$ where $R_i$'s are closed sets with mutually disjoint interiors. As before we set $\Lambda=\{1,2,\dots,s\}$. We have
$$m(R_i \backslash R_i^\circ) = 0, i\in\Lambda \text{ and } T(R_i)=\bigcup_{j \in \Lambda_i} R_j,$$
for some suitable subset $\Lambda_i$ of $\Lambda$. By refining the Markov partition, if necessary, we may assume that $T$ is invertible on each partition element $R_i$.  Let $A$ be the $s \times s$ transition matrix associated to $\mathcal R$. This means that all entries $A(i,j)$ are in $\{0,1\}$, and $A(i,j)=1$ if and only if $T(R_i) \cap R_j^\circ \neq \emptyset$ if and only if $j \in \Lambda_i.$ For any admissible $(i_0,i_1,\dots,i_k)$ we define the $(k+1)$-cylinder set in $J$ by
$$R(i_0,i_1,\dots,i_k):=\bigcap_{j=0}^k T^{-j} R_{i_j},$$
which is nonempty. We define a map $\pi: \Sigma_A \to J$ via
$$\pi(  x)=\bigcap_{k=0}^\infty R(x_0,x_1,\dots,x_k).$$ 
This defines a factor map from $(\Sigma_A^+,\sigma)$ to $(J,T)$ as $T\circ \pi = \pi \circ \sigma.$

In the case when $(\Sigma, T)$ is an Axiom A diffeomorphism, by abuse of notation, we again let $\mathcal R = \{R_1,\dots,R_s\}$ be the associated Markov partition. Again, let $A$ be the associated transition matrix. For any admissible $(i_{-\ell},\dots,i_k)$  we define
$$R(i_{-\ell},\dots,i_k):=\bigcap_{j=-\ell}^k T^{j} R_{i_j},$$
and consider the map $\pi: \Sigma_A \to \Omega(T)$ via
$$\pi(  x)=\bigcap_{k=0}^\infty \overline{R(x_{-k},\dots,x_k)}.$$ 
 As a result, we get a factor map $\pi: \Sigma_A \to \Omega(T)$ so that $T\circ \pi=\pi \circ \sigma.$

 Moreover, the push-forward of the measure $  m$ under $\pi$ gives $\pi_*   m = m$ where as before $m$ is the unique measure of maximal entropy in the settings we consider.

We end this section by giving the proof of Theorem~\ref{thm:main}. We need the following classical lemma.

\begin{lem}\label{lem:measure}
Any $T$-invariant probability measure $\mu$ on $J$ can be lifted to a $\sigma$-invariant probability measure $\tilde\mu$ on $\Sigma_A^+$ so that $\pi_* \tilde \mu =\mu.$ Similarly, any $T$-invariant probability measure $\mu$ on $\Sigma$ can be lifted to a $\sigma$-invariant probability measure $\tilde\mu$ on $\Sigma_A$ so that $\pi_* \tilde \mu =\mu.$
\end{lem}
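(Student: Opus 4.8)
The plan is to realize $\tilde\mu$ as a $\sigma$-invariant point in the compact convex set of all lifts of $\mu$, via a standard fixed-point argument. Write $\mc M(\Sigma_A^+)$ for the space of Borel probability measures on $\Sigma_A^+$ with the weak$^*$ topology, and set
$$K:=\{\nu \in \mc M(\Sigma_A^+) : \pi_* \nu = \mu\}.$$
First I would verify that $K$ is nonempty. Since $\pi:\Sigma_A^+ \to J$ is a continuous surjection between compact metric spaces, it admits a Borel measurable section $s:J \to \Sigma_A^+$ with $\pi \circ s = \id_J$; then $s_* \mu \in K$, so $K \ne \emptyset$. The set $K$ is convex, and it is weak$^*$-closed because $\nu \mapsto \pi_* \nu$ is weak$^*$-continuous, while $\mc M(\Sigma_A^+)$ is weak$^*$-compact. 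Hence $K$ is a nonempty compact convex subset of the locally convex space of signed measures.

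The key step is that the pushforward operator $\sigma_*$ preserves $K$. Indeed, for $\nu \in K$ the intertwining relation $T \circ \pi = \pi \circ \sigma$ gives
$$\pi_*(\sigma_* \nu) = (\pi \circ \sigma)_* \nu = (T \circ \pi)_* \nu = T_*(\pi_* \nu) = T_* \mu = \mu,$$
where the last equality uses the $T$-invariance of $\mu$. Thus $\sigma_*(K) \subseteq K$, and $\sigma_*$ is an affine weak$^*$-continuous self-map of $K$.

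It remains to produce a fixed point. Applying the Markov--Kakutani fixed point theorem to the single affine continuous map $\sigma_*$ on the nonempty compact convex set $K$ yields some $\tilde\mu \in K$ with $\sigma_* \tilde\mu = \tilde\mu$. Equivalently, a Krylov--Bogolyubov argument works: take any $\nu \in K$, form the Cesàro averages $\frac1N \sum_{n=0}^{N-1} \sigma_*^n \nu \in K$, and pass to a weak$^*$-convergent subsequence, whose limit is automatically $\sigma$-invariant. Either way $\tilde\mu$ is $\sigma$-invariant and satisfies $\pi_* \tilde\mu = \mu$, as required. The two-sided case is verbatim the same, with $\Sigma_A^+$ replaced by $\Sigma_A$ and $J$ by the non-wandering set $\Omega(T)$.

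I expect the only genuine subtlety to be the nonemptiness of $K$: the existence of a measurable section (equivalently, surjectivity of $\pi_*$) relies on a measurable uniformization theorem for continuous surjections of compact metric spaces. Everything else — convexity and weak$^*$-compactness of $K$, its invariance under $\sigma_*$, and the fixed-point extraction — is routine.
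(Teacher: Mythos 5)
Your proof is correct. The paper itself offers no proof of this lemma---it is stated as ``classical'' and used immediately---so there is nothing to compare against; your argument (realize the set $K$ of lifts of $\mu$ as a nonempty, convex, weak$^*$-compact set, show $\sigma_*$ preserves it via $\pi\circ\sigma = T\circ\pi$ and $T_*\mu=\mu$, then extract a fixed point by Markov--Kakutani or Ces\`aro averaging) is exactly the standard route. You correctly identify the only nontrivial point, the nonemptiness of $K$; besides the measurable-section argument you cite, one can also get this more softly by Hahn--Banach: the functional $f\circ\pi \mapsto \int f\,d\mu$ on the closed subspace $\{f\circ\pi : f \in C(J)\}\subset C(\Sigma_A^+)$ is positive of norm $1$, and any norm-preserving extension to $C(\Sigma_A^+)$ is automatically positive (it has norm $1$ and takes the value $1$ at the constant function $1$), hence is given by a probability measure in $K$ via Riesz representation.
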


\begin{proof}[Proof of Theorem~\ref{thm:main}]
We will only prove the theorem for $(J,T)$ as the case of $(\Sigma,T)$ is analogous. Let $f$ be a Lipschitz function on $J$ with $L(f)=L$ and let $\mu$ be any $T$-invariant probability measure on $J$. Recall the constant $\theta_0>1$ appeared in the definition of repeller. It is easy to see that if we fix $\theta \in (\theta_0^{-1},1)$, then the function $f\circ\pi$ on $\Sigma_A^+$ is a Lipschitz function with Lipschitz constant $L$ w.r.t. metric $d_\theta$. We use Lemma~\ref{lem:measure} to lift the measure $\mu$ to a $\sigma$-invariant measure $ \tilde \mu$ on $\Sigma_A^+$. Applying Theorem~\ref{thm:symbolic} we get
$$\left |\int  f\circ\pi d\tilde\mu-\int  f\circ \pi d  m \right| \le L (h_{  m}(\sigma)-h_{  \mu}(\sigma))^{1/2}.$$
Note that $\log \lambda=h_{  m}(\sigma)=h_{m}(T)$, $h_{\tilde \mu}(\sigma) \ge h_{\mu}(T)$, and 
$$\int_{\Sigma_A} f\circ\pi d \tilde\mu =\int_J f d\pi_* \mu=\int_J f d\mu.$$ Similarly, $\int_{\Sigma_A}  f\circ \pi  d  m=\int_J f dm$. Therefore,
\begin{multline*}
\left |\int_{J}  f d\mu-\int_{J}  f dm \right|=\left |\int_{\Sigma_A}  f\circ\pi d\tilde \mu-\int_{\Sigma_A}  f\circ \pi d  m \right| \\
\le  L (h_{  m}(\sigma)-h_{ \tilde \mu}(\sigma))^{1/2}\le    L (h_{m}(T)-h_{ \mu}(T))^{1/2}.
\end{multline*}
\end{proof}

\section{Estimate for the Hausdorff dimension}
In this section we use Theorem~\ref{thm:main} to prove Theorem~\ref{thm:dim}. Implicitly, we relate dimension estimates to topological entropy. We refer to a very recent work \cite{FS14} where this relation was made precise for hyperbolic automorphisms of two dimensional torus. 

We treat both cases together. Clearly the set $ E(x_0,\delta)$ is compact and $T$-invariant. We first estimate the topological entropy $h(T |_{E(x_0,\delta)})$. From Theorem~\ref{thm:main} we have that if an invariant measure $\mu$ satisfies $\mu(B(x_0,\delta))=0$ and if $f \ge 0$ is a Lipschitz function with support in $B(x_0,\delta)$ then
$$h_\mu(T) \le h_m(T) - \left(\frac{\int f dm}{cL}\right)^2.$$
To optimise the upper bound we need to choose a suitable function $f$. For example, we may choose a function $f$ which describes the cone with height $\delta L$ and radius in the base equal to $\delta$. Then, clearly $f$ is a Lipschitz function with Lipschitz constant $L$ and $\int f dm \asymp m(B(x_0,\delta)) \delta L$ so that for large values of $L$
$$\left(\frac{\int f dm}{cL}\right)^2 \gg \delta^2 m(B(x_0,\delta))^2.$$
Thus, from the Variational Principle we conclude that for some constant $c>0$ independent of $\delta>0$ and $x_0\in M$  
\begin{equation}
h(T |_{E(x_0,\delta)}) \le h_m(T) - c \delta^2 m(B(x_0,\delta))^2= \log \lambda- c \delta^2m(B(x_0,\delta))^2 .
\end{equation}
On the other hand, for any $h>h(T|_{E(x_0,\delta)})$ and $\epsilon>0$ there exists $N_0$ such that for any $k>N_0$, the maximum cardinality of an $(k,\epsilon)$-separated set in $E(x_0,\delta)$ is less than $e^{h k}$. In particular, if we let
$$\mathcal R_k:=\{R(i_1,\dots,i_k) : R(i_1,\dots,i_k) \cap  E(x_0,\delta)\neq \emptyset\}$$
then, $|\mathcal R_k| \le e^{kh}.$ 

From continuity of $T$ and compactness of $X$, where $X$ is either $J$ or $M$, there exists $\Theta \ge \theta_0$ such that $d(Tx,Ty)\le \Theta d(x,y).$ On the other hand, we recall that the Parry measure satisfies
 $$a \lambda^{-n}\le m(R(i_0,i_1,\dots,i_n)) \le b \lambda^{-n}.$$
It follows that each cylinder set $R(i_0,i_1,\dots,i_n)$ can be covered with at most $a \lambda^{-n}/\Theta^{-n\dim M}$ balls of radius $\Theta^{-n}$, for some possibly modified constant $a$ (independent of $n \in \N$). Hence,
\begin{lem}
For any $h>h(T|_{E(x_0,\delta)})$ and sufficiently large $n$, the set $E(x_0,\delta)$ can be covered by $a e^{nh -n\log \lambda +n \dim M \log \Theta}$ balls of radius $\Theta^{-n}.$
\end{lem}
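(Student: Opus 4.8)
The plan is simply to assemble the two estimates recorded just above into a single covering count, so the argument is bookkeeping rather than anything substantial. First I would observe that the depth-$n$ cylinders tile the ambient space: since the Markov partition elements $R_1,\dots,R_s$ cover $J$ (respectively $\Omega(T)$) up to boundaries of measure zero, every point of $J$ lies in at least one cylinder $R(i_0,\dots,i_n)$. In particular every point of $E(x_0,\delta)$ does, and the cylinder containing such a point then meets $E(x_0,\delta)$ and hence, by definition, belongs to $\mathcal{R}_n$. Consequently
$$E(x_0,\delta) \subseteq \bigcup_{R \in \mathcal{R}_n} R,$$
so it suffices to cover each cylinder of $\mathcal{R}_n$ separately by balls of radius $\Theta^{-n}$ and then sum the number of balls used.

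Next I would feed in the two inputs established above. By the $(k,\epsilon)$-separated set estimate we have $|\mathcal{R}_n| \le e^{nh}$ for every $h>h(T|_{E(x_0,\delta)})$ once $n$ is large, and by the measure/volume comparison each individual cylinder $R \in \mathcal{R}_n$ is covered by at most $a\lambda^{-n}\Theta^{n\dim M}$ balls of radius $\Theta^{-n}$. Multiplying the two counts, for all sufficiently large $n$ the set $E(x_0,\delta)$ is covered by at most
$$|\mathcal{R}_n| \cdot a\lambda^{-n}\Theta^{n\dim M} \le e^{nh}\cdot a\lambda^{-n}\Theta^{n\dim M} = a\,e^{nh - n\log\lambda + n\dim M\log\Theta}$$
balls of radius $\Theta^{-n}$, where in the last step I simply rewrote $\lambda^{-n}=e^{-n\log\lambda}$ and $\Theta^{n\dim M}=e^{n\dim M\log\Theta}$. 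This is exactly the asserted bound.

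There is no genuine obstacle here; the only points that deserve a word of care are the two quiet hypotheses hidden in the phrase ``sufficiently large $n$''. One must apply both input estimates at the \emph{same} $n$, so the threshold for $n$ is the larger of the $N_0$ coming from the entropy/separated-set bound and the threshold needed for the single-cylinder covering bound. Secondly, the containment $E(x_0,\delta)\subseteq\bigcup_{R\in\mathcal{R}_n}R$ relies on $E(x_0,\delta)$ being a subset of the set carried by the Markov partition (namely $J$, respectively $\Omega(T)$), which is immediate from the definition of $E(x_0,\delta)$; the overlap of cylinders along their measure-zero boundaries is harmless since for a covering we only need the union to contain $E(x_0,\delta)$.
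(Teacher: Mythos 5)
Your proof is correct and is essentially the paper's own argument: the paper likewise covers $E(x_0,\delta)$ by the cylinders in $\mathcal{R}_n$ and multiplies the cylinder count $|\mathcal{R}_n|\le e^{nh}$ by the per-cylinder covering bound $a\lambda^{-n}\Theta^{n\dim M}$, obtaining $a e^{nh-n\log\lambda+n\dim M\log\Theta}$ balls of radius $\Theta^{-n}$. The only difference is that you make explicit what the paper leaves implicit, namely the containment $E(x_0,\delta)\subseteq\bigcup_{R\in\mathcal{R}_n}R$ and the need to take $n$ beyond both thresholds simultaneously.
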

\begin{proof}
Continuing the above arguments, we get that the number of covers needed is
$$ae^{nh} \lambda^{-n}/\Theta^{-n\dim M}=a e^{nh -n\log \lambda +n \dim M \log \Theta}.$$
\end{proof}
Thus, we get
\begin{align*}
\dim_H E(x_0,\delta)\le   \dim E(x_0,\delta) &\le \lim_{n \to \infty}\frac{\log (a e^{nh -n\log \lambda +n \dim M \log \Theta}) }{-\log \Theta^{-n}}\\
&=\lim_{n \to \infty}\frac{nh -n\log \lambda +n \dim M \log \Theta }{n\log\Theta}\\
&=\dim M- \frac{\log \lambda - h}{\log \Theta}.
\end{align*}
This is true for any $h>h(T|_{E(x_0,\delta)})$ where $h(T|_{E(x_0,\delta)}) \le \log \lambda - c \delta^2m(B(x_0,\delta))^{2}.$ Thus, replacing $c/\log \Theta$ by $c$, we conclude that
$$ \dim_H E(x_0,\delta)\le \dim M - c\delta^2m(B(x_0,\delta))^2.$$
This concludes the proof of Theorem~\ref{thm:dim}.

\end{document}